\def\S{\mathfrak{S}}
\def\inv{^{-1}}
\def\a{\alpha}
\def\ph{\varphi}
\def\s{\sigma}
\def\l{\lambda}
\def\t{\tau}
\def\ra{\rightarrow}
\def\id{\mathrm{id}}
\def\nD{\overline{D}}
\def\nE{\overline{E}}
\newcommand{\al}[1]{\begin{align*} #1 \end{align*}}
\newtheorem{thm}{Theorem}
\newtheorem{lem}{Lemma}
\theoremstyle{definition}
\newtheorem*{examples}{Examples}
\newcommand{\remove}[1]{}
\newtheorem*{L1}{Lemma~\ref{lem:pimap}}
\newtheorem*{T1}{Theorem~\ref{thm:desc}}
\newtheorem*{T2}{Theorem~\ref{thm:invol}}
\newtheorem*{T3}{Theorem~\ref{thm:subtr}}
\begin{document}
\title{Naturally emerging maps for derangements and nonderangements}
\author{Melanie Ferreri}
\maketitle
\begin{abstract}
    A derangement is a permutation with no fixed point, and a nonderangement is a permutation with at least one fixed point. There is a one-term recurrence for the number of derangements of $n$ elements, and we describe a bijective proof of this recurrence which can be found using a recursive map. We then show the combinatorial interpretation of this bijection and how it compares with other known bijections, and show how this gives an involution on $\mathfrak{S}_n$. Nonderangements satisfy a similar recurrence. We convert the bijective proof of the one-term identity for derangements into a bijective proof of the one-term identity for nonderangements.
\end{abstract}

\section{Introduction}
A \textit{derangement} is a permutation $\s \in \S_n$ such that for all $i \in [n]$, $$\s(i) \neq i,$$ i.e. a permutation which does not fix any element. 
We denote by $D_n$ the set of derangements on $n$ elements, and let $d_n = |D_n|$. Let $E_n$ be the set of permutations of $n$ elements with exactly one fixed point, and let $e_n = |E_n|$.
Two well-known recurrence relations for counting derangements are 
\begin{equation}
\label{eqn:twoterm}
    d_n = (n-1)d_{n-1} + (n-1)d_{n-2}
\end{equation}
 and
 
 \begin{equation}
\label{eqn:oneterm}
    d_n = nd_{n-1} + (-1)^n
\end{equation}
with $d_0 = 1$ and $d_1 = 0$.  

In Chapter 2.2 of \cite{stanley2012enumerative}, Stanley states that it is easy to give a combinatorial proof for (\ref{eqn:twoterm}), but it takes considerably more work to show the one-term recurrence (\ref{eqn:oneterm}) combinatorially. There are several bijective proofs in the literature. 
In \cite{REMMEL1983371}, Remmel proves the one-term identity, along with a $q$-analog of the equation. 
D\'{e}sarm\'{e}nien and Wilf also give bijective proofs in \cite{desar} and \cite{wilf}.
More recently, Benjamin and Ornstein \cite{bo} gave a bijection in four cases, and Elizalde \cite{elizalde2020simple} gave a bijection which, similarly to Remmel's bijection, involves two cases that depend on the disjoint cycle decomposition of $\s$. 


Our objective is to show that a bijective proof of (\ref{eqn:oneterm}) 
emerges naturally from the two-term map and the inductive proof of the one-term identity, both of which will be recalled in Section~\ref{recurrences}.
In order to do this, we show that a bijection proving the recurrence $d_n = (n-1)d_{n-1} + e_{n-1}$ can be applied recursively to obtain a bijection demonstrating the identity $d_n = e_n + (-1)^n$, which has a direct combinatorial interpretation. From there, composing with a map from $E_n$ to $[n] \times D_{n-1}$ yields the desired bijection proving (\ref{eqn:oneterm}).

Furthermore, the map we obtain can be modified to take in elements of both $D_n$ and $E_n$, yielding an involution which exchanges derangements and permutations with exactly one fixed point, excluding one element. Extending this to all elements of $\S_n$, we obtain an involution on the entire symmetric group. 

The recurrence relations for derangements can be used to obtain recurrence relations for \textit{nonderangements} as well, which are permutations with at least one fixed point. We denote by $\overline{D}_n$ the set of nonderangements in $\S_n$. Similarly, we let $\overline{E}_n$ be the set of permutations in $\S_n$ which fix either zero elements or at least two elements. In Section~\ref{nonderangements}, we discuss these sets in more detail, and show how the previously obtained maps can also be used to define a bijection from $\overline{D}_n$ to $\overline{E}_n$, and another bijection from $\overline{E}_n$ to $[n] \times  \overline{D}_{n-1}$. This yields bijective proofs for recurrence relations of nonderangements.


\section{Recurrence relations for $d_n$ and $e_n$} \label{recurrences}


The number of permutations of $[n]$ with exactly one fixed point satisfies the identity
\begin{equation}
    \label{eqn:en}
    e_n = nd_{n-1},
\end{equation}
since a permutation on $[n]$ with exactly one fixed point can be obtained by choosing one of the $n$ elements to be fixed, and then deranging the remaining $n-1$ elements. It will be useful later to have an explicit bijection showing the identity (\ref{eqn:en}), which we define as
$f_n: [n] \times D_{n-1} \ra E_n$.
Given a pair $(m, \s)$, $f_n$ gives a permutation with exactly one fixed point by replacing $m$ with $n$ in the disjoint cycle decomposition of $\sigma$ and fixing $m$ if $m<n$, and otherwise just appending the one-cycle $(n)$. When we have a sequence of cycles that are not disjoint, we will use the convention of composing cycles from left to right. 
To define the inverse map, we make use of some notation provided in \cite{elizalde2020simple}: Given a permutation $\sigma \in S_n$ and $a \in [n]$, we denote by $\sigma \setminus a$ the permutation given by removing $a$ from the disjoint cycle decomposition of $\sigma$. Then $f_n\inv(\tau)$ takes the fixed point of $\tau$ for the first coordinate, and for the second coordinate, it swaps the fixed point of $\tau$ with $n$ and then removes $(n)$ to get a permutation in $D_{n-1}$. So, equation (\ref{eqn:twoterm}) may be rewritten as
\begin{equation}
\label{eqn:new}
   d_n = (n-1)d_{n-1} + e_{n-1},
\end{equation}
with $d_0 = 1$, $d_1 = 0$, and $e_n = nd_{n-1}$.
It is straightforward to show this bijectively, and to construct the recursive bijection in the next section, we will need notation for the bijection showing (\ref{eqn:new}). 
Let 
\al{
\ph_n: D_n &\ra ([n-1] \times D_{n-1} ) \cup E_{n-1} \\
\sigma &\mapsto 
\begin{cases}
(\sigma(n), \sigma \setminus n ) &\text{if } \sigma \setminus n \in D_{n-1}\\
\sigma \setminus n &\text{if } \sigma \setminus n \in E_{n-1}.
\end{cases}
}
Removing $n$ from the cycle factorization of a derangement yields a permutation in $E_{n-1}$ exactly when $n$ was in a transposition in $\sigma$, so the first case occurs exactly when $n$ is not in a transposition in $\sigma$. 
The inverse map $\ph_n\inv$ is as follows:
\al{
\ph_n\inv: ([n-1] \times D_{n-1}) \cup E_{n-1} &\ra D_n \\
(m,\sigma) &\mapsto \sigma (nm) \text{ \,\, if $(m,\s) \in [n-1] \times D_{n-1}$}\\
\tau &\mapsto (an) \tau \text{ \hspace{3mm} if $\t \in E_{n-1}$}
}
where $a$ is the unique fixed point of $\tau$ if $\tau \in E_{n-1}$. In the first case, we essentially insert $n$ in the cycle of $\sigma$ containing $m$, just before $m$, which undoes the operation of $\ph_n$ to get a derangement once again. In the second case, we are just taking the fixed point $a$ and putting it in a transposition with $n$, which undoes the second case of $\ph_n$.

It is know that the recurrence (\ref{eqn:oneterm})
can be proven from (\ref{eqn:twoterm}) by induction. 
Letting $d_0 = 1$, we obtain that $d_1 = 1 + (-1) = 0$, which is indeed the number of derangements on $[1]$. Let $n>1$, and suppose for induction that the result holds for $d_{n-1}$. Then, using the relation (\ref{eqn:twoterm}), we have
\al{
d_n &= (n-1)d_{n-1} + (n-1)d_{n-2} \\
&= (n-1)d_{n-1} + (d_{n-1} - (-1)^{n-1}) \tag{by the inductive hypothesis} \\
&= nd_{n-1} - (-1)^{n-1}\\
&= nd_{n-1} + (-1)^{n}.
}
Then, substituting (\ref{eqn:en}) into (\ref{eqn:oneterm}), we obtain
\begin{equation}
    \label{eqn:onetermnew}
    d_n = e_n + (-1)^n.
\end{equation}
From here, we will use the maps $f_n$ and $\ph_n$ to obtain a bijection demonstrating the relation (\ref{eqn:onetermnew}). 



\section{Defining maps}\label{definingmaps}

Given sets $A$ and $B$, and functions $f: A \ra C$ and $g: B \ra D$, we define 
\al{
f \oplus g: A \sqcup B &\ra C \cup D \\
x &\mapsto \begin{cases}
f(x) &\text{if } x\in A\\
g(x) &\text{if } x \in B
\end{cases}
}
with $\sqcup$ indicating a disjoint union.
Define the permutation $\pi_n \in \S_n$ to be
$$
\pi_n = \begin{cases}
(1\,\, 2)(3\,\,4) \cdots (n-1 \,\, n) &\text{ if } n \text{ is even,}\\
(1\,\, 2)(3\,\,4) \cdots (n-2 \,\, n-1)(n) &\text{ if } n \text{ is odd.}
\end{cases}
$$
Let $\Pi_n$ be the singleton set containing the permutation $\pi_n$, 
and let $\ell_n: \Pi_n \ra \Pi_{n-1}$ be the map between these singleton sets.
Also define $g_n: [n] \times D_{n-1} \ra [n-1] \times D_{n-1} \cup D_{n-1}$ to be the map which removes the first coordinate if it is $n$, and otherwise does nothing. 

To obtain a combinatorial proof of the identity $d_n = e_n + (-1)^n$, we will construct a bijection $\a_n$, which will have an extra element in either its domain or codomain, depending on the parity of $n$.
Denote by $()$ the empty permutation from $\emptyset$ to $\emptyset$. From the definition of $\pi_n$ we have that $\pi_0 = ()$ and $\pi_1 = (1)$. Then the bases cases $\a_0: D_0 \ra E_0 \cup \Pi_0 $ and $\a_1: D_1 \cup \Pi_1 \ra E_1$ are both the identity on sets of size one.

For $n>1$, we define the maps $\a_n$ and $\a_n\inv$ recursively. If $n$ is even, define $\a_n: D_n \ra E_n \cup \Pi_n$ as follows:
\begin{align*}
\alpha_n: D_n \xrightarrow{\ph_n} ([n-1] \times D_{n-1}) \cup E_{n-1} \xrightarrow{\id_{[n-1] \times D_{n-1}} \oplus \a_{n-1}\inv} ([n-1] \times D_{n-1}) \cup D_{n-1} \cup \Pi_{n-1} \\
\xrightarrow{g_n\inv \oplus \ell_n\inv} ([n] \times D_{n-1}) \cup \Pi_{n} \xrightarrow{f_n \oplus \id_{\Pi_n}} E_n \cup \Pi_n
\end{align*}
If $n$ is odd, we define $\a_n: D_n \cup \Pi_n \ra E_n $ as follows:
\al{
\a_n: D_n \cup \Pi_n &\xrightarrow{\ph_n \oplus \ell_n} ([n-1] \times D_{n-1} ) \cup  E_{n-1} \cup \Pi_{n-1} \\
&\xrightarrow{\id_{[n-1] \times D_{n-1}} \oplus \a_{n-1}\inv}( [n-1] \times D_{n-1} )\cup D_{n-1}  \xrightarrow[]{g_n\inv} [n] \times D_{n-1} \xrightarrow[]{f_n} E_n.}
We also define the inverse maps. We let $\a_0\inv: E_0 \cup \Pi_0 \ra D_0 $ send $\pi_0 \mapsto ()$, and let $\a_1\inv: E_1 \ra D_1 \cup \Pi_1$ send $(1) \mapsto \pi_1$. So again, $\a_0\inv$ and $\a_1\inv$ are both the identity on sets of size one. Otherwise let $n>1$.

For $n$ even, define $\a_n\inv: E_n \cup \Pi_n \ra D_n$ as follows:
\al{
\a_n\inv: E_n \cup \Pi_n \xrightarrow{f_n\inv \oplus \id_{\Pi_n}} ([n] \times D_{n-1}) \cup \Pi_n \xrightarrow[]{g_n \oplus \ell_n}
([n-1] \times D_{n-1}) \cup D_{n-1} \cup \Pi_{n-1}\\ \xrightarrow{\id_{[n-1] \times D_{n-1}}\oplus \a_{n-1}} 
([n-1] \times D_{n-1} ) \cup  E_{n-1} \xrightarrow{\ph_n\inv} D_n.
}

For $n$ odd, define $\a_n\inv: E_n\ra D_n  \cup \Pi_n $ as follows:
\al{
\a_n\inv: E_n  &\xrightarrow{f_n\inv} [n] \times D_{n-1}\xrightarrow[]{g_n}
([n-1] \times D_{n-1}) \cup D_{n-1} \\ &\xrightarrow{\id_{[n-1] \times D_{n-1}}\oplus \a_{n-1}}
([n-1] \times D_{n-1} ) \cup  E_{n-1} \cup \Pi_{n-1} \xrightarrow{\ph_n\inv \oplus \ell_n\inv} D_n \cup \Pi_{n}.
}

Note that by construction of $\a_n$, for $n$ even, the composition of $\a_n$ with the map $f_n\inv \oplus \id_{\Pi_n}$ has image $[n] \times D_{n-1} \cup \Pi_n$, and for $n$ odd, the composition of $\a_n$ with the map $f_n\inv$ has image $[n] \times D_{n-1}$. Let 
$$A_n = \begin{cases}
    (f_n\inv \oplus \id_{\Pi_n}) \circ \a_n &\text{if } n \text{ is even,}\\
    f_n\inv \circ \a_n &\text{if } n \text{ is odd,}
\end{cases}
$$
with composition being read from right to left here. Then the composition $A_n$ is a bijection showing the identity (\ref{eqn:oneterm}).


\subsection{Combinatorial description of $\a_n$} \label{sec:description}

After defining $\a_n$, we can trace through the recursion to obtain a direct description of the image of a derangement $\s$. First, $\a_n$ sends $\pi_n$ to $\pi_n$ always. Let $\sigma \in D_n$, $\s \neq \pi_n$. To find $\a_n(\s) \in E_n$, we look at the disjoint cycle decomposition of $\s$, and find the smallest $j$ such that
$$
\sigma = (\cdots) (j \,\,\, j+1) \cdots (n-1 \,\,\, n)
$$
where the initial $(\cdots)$ is any combination of cycles. If there is no trailing pattern of simple transpositions like this, we let $j= n+1$.

Case 1. If $j-1$ is in a 2-cycle, we have 
$$
\sigma = (\cdots)(\cdots j-2 \,\,\, b \cdots ) (j-1 \,\,\, a) (j \,\,\, j+1) \cdots (n-1\,\,\, n)
$$
which is sent to
$$
\a_n(\sigma) = (\cdots)(\cdots j -2 \,\,\, a \,\,\, b \cdots ) (j-1 \,\,\, j) (j+1 \,\,\, j+2) \cdots (n-2\,\,\, n-1)(n)
$$
where any values above $n$ are excluded from the disjoint cycle notation. In particular, if $j-1 = n$, then we have $n$ fixed. 

Case 2. If $j-1$ is not in a 2-cycle, we have
$$
\sigma = (\cdots) (\cdots j-1 \,\,\, a \,\,\, b \cdots ) (j \,\,\, j+1) \cdots (n-1\,\,\, n)
$$
which is sent to
$$
\a_n(\sigma) = (\cdots)(\cdots j-1 \,\,\, b \cdots ) (j \,\,\, a) (j+1 \,\,\, j+2) \cdots (n-2\,\,\, n-1)(n),
$$
where again any values above $n$ are excluded from the cycles. In particular, if $j-1 = n$, then in this case we have $a$ fixed. 

The inverse map operates similarly to $\a_n$. It is described directly in Section \ref{involution}.\\




\begin{examples}[$n=7$]\hfill
\begin{itemize}
    \item $(12)(346)(57)$: Here, there is no pattern of simple transpositions at the end. So we let $j=8$, so $j-1 = 7$, which appears in a 2-cycle. So this is in Case 1. 

    So we take the image of $7$ and put it in the cycle containing 6, directly after 6.

    \begin{center}
        \begin{tikzpicture}[scale=.3]
\draw
(0,0) node (){(}
(1,0) node(){1}
(2,0) node(){2}
(3,0) node(){)}
(4,0) node(){(}
(5,0) node(){3}
(6,0) node(){4}
(7,0) node(){6}
(7.3,0) node (b) {}
(8,0) node(e){)}
(9,0) node(f){(}
(10,0) node (a) {5}
(11,0) node(){7}
(12,0) node(g){)}
;

\path[->,>=stealth]
(a) edge[bend right=70] node [left] {} (b);

\end{tikzpicture}
    \end{center}
    
    Then everything to the right of that cycle becomes $(j-1 \,\, j) \cdots (n) = (7 \,\, 8) \cdots (n)$, but we remove everything above $n=7$, so this just becomes $(7)$. So the image of this derangement is 
    $$
    (12)(3465)(7).
    $$

       \item $(124)(35)({67})$: Here, there is a pattern of simple transpositions at the end, so $j = 6$ and $j-1 = 5$, which appears in a 2-cycle. So this is again in Case 1. 

    We take the image of $5$ and put it in the cycle containing 4, directly after 4.

    \begin{center}
        \begin{tikzpicture}[scale=.3]
\draw
(0,0) node (){(}
(1,0) node(){1}
(2,0) node(){2}
(3,0) node(){4}
(3.3,0) node (b) {}
(4,0) node(){)}
(5,0) node(){(}
(6,0) node(a){3}
(7,0) node(){5}
(8,0) node(e){)}
(9,0) node(f){(}
(10,0) node () {6}
(11,0) node(){7}
(12,0) node(g){)}
;

\path[->,>=stealth]
(a) edge[bend right=70] node [left] {} (b);

\end{tikzpicture}
    \end{center}
    Then everything to the right of that cycle becomes $(j-1 \,\, j) \cdots (n) = (5 \,\, 6)(7\,\, 8) \cdots (n)$, but we remove everything above $n=7$, so this just becomes $(56)(7)$. So the image of this derangement is 
    $$
    (1243)(56)(7).
    $$

   \item $(12)(34)(567)$: Here, there is no pattern of simple transpositions at the end so $j = 8$ and $j-1 = 7$, which appears in a cycle of length $\geq 3$. So this is in Case 2. 

    The image of $7$ gets moved into a 2-cycle with $j =8$:

    \begin{center}
        \begin{tikzpicture}[scale=.3]
\draw
(0,0) node (){(}
(1,0) node(){1}
(2,0) node(){2}
(3,0) node(){)}
(4,0) node(){(}
(5,0) node(){3}
(6,0) node(){4}
(7,0) node(e){)}
(8,0) node(f){(}
(9,0) node(a){5}
(10,0) node () {6}
(11,0) node(){7}
(12,0) node(g){)}
(13.5,0) node (b) {}
;
\draw
(13,0) node(h) {(}
(14,0) node() {8}
(15,0) node(i) {)}
;

\path[->,>=stealth]
(a) edge[bend left=70] node [right] {} (b);

\end{tikzpicture}
    \end{center}
    So we have $(12)(34)(67)(58)$, but we remove everything above $n=7$, so this just becomes 
    $$
    (12)(34)(67)(5).
    $$

    \item $(12345)({67})$: Here, there is a pattern of simple transpositions at the end. We have $j = 6$ and $j-1 = 5$, which appears in a cycle of length $\geq 3$. So this is again in Case 2. 

    The image of $5$ gets moved into a 2-cycle with $j =6$:

    \begin{center}
        \begin{tikzpicture}[scale=.3]
\draw
(0,0) node (){(}
(1,0) node(a){1}
(2,0) node(){2}
(3,0) node(){3}
(4,0) node(){4}
(5,0) node(){5}
(6,0) node(){)}
(7,0) node(e){(}
(7.5,0) node(b) {}
(8,0) node(){6}
(9,0) node(){7}
(10,0) node (f) {)}
;

\path[->,>=stealth]
(a) edge[bend left=70] node [right] {} (b);

\end{tikzpicture}
    \end{center}
    Then everything in the 2-cycles above $j$ changes to $(j+1 \,\, j+2) \cdots (n) = (7 \,\, 8) \cdots (n)$. 
    So we have $(2345)(16)(78)\cdots (n)$, but we remove everything above $n=7$, so this just becomes 
    $$
    (16)(2345)(7).
    $$
\end{itemize}
    \end{examples}
    
We can check that this description indeed matches the effect of applying $\a_n$ by tracing through the recursive definitions of the maps.  Proofs can be found in Appendix \ref{proofs}.


\begin{lem}\label{lem:pimap}
Let $\pi_n \in \S_n$ be as defined earlier.  For all $n \geq 1$, $\a_n$ and $\a_n\inv$ fix $\pi_n$.
\end{lem}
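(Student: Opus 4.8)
The plan is to prove the statement by induction on $n$, tracking the orbit of $\pi_n$ through the recursive definitions of $\a_n$ and $\a_n\inv$. The base case is $n=1$: the set $D_1 \cup \Pi_1 = \{(1)\}$ coincides with $E_1$, and $\a_1$ (resp.\ $\a_1\inv$) is the identity on this one-element set, so it fixes $\pi_1 = (1)$. (The one-element map $\a_0$ is likewise the identity, which is all that is needed to feed the $n=2$ step.) Two elementary facts about the $\pi_m$ will do all the work in the inductive step: (i) $\pi_n \setminus n = \pi_{n-1}$, which one checks directly from the definition of $\pi_n$ in both parities (deleting $n$ turns the trailing $(n-1\ n)$ into the fixed point $n-1$ when $n$ is even, and deletes the trailing $(n)$ when $n$ is odd); and (ii) $\pi_m \in D_m$ when $m$ is even while $\pi_m \in E_m$, with unique fixed point $m$, when $m$ is odd. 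Fact (ii), together with $\ell_n(\pi_n) = \pi_{n-1}$ by definition, is exactly what pins down which branch of each $\oplus$-map is taken at every arrow.

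For the inductive step with $n$ even, $n$ lies in the transposition $(n-1\ n)$ of $\pi_n$, so by (i) and the definition of $\ph_n$ we have $\ph_n(\pi_n) = \pi_n \setminus n = \pi_{n-1}$, which by (ii) lies in $E_{n-1}$ since $n-1$ is odd. The next arrow $\id_{[n-1]\times D_{n-1}} \oplus \a_{n-1}\inv$ therefore applies $\a_{n-1}\inv$, which by the inductive hypothesis sends $\pi_{n-1}$ to $\pi_{n-1}$; as $\pi_{n-1}$ fixes $n-1$ it is not a derangement, so it must land in the $\Pi_{n-1}$ summand. Then $g_n\inv \oplus \ell_n\inv$ applies $\ell_n\inv$, giving $\pi_n \in \Pi_n$, and $f_n \oplus \id_{\Pi_n}$ fixes it. Hence $\a_n(\pi_n) = \pi_n$.

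For the inductive step with $n$ odd, $\pi_n$ fixes $n$ so $\pi_n \in \Pi_n$, and $\ph_n \oplus \ell_n$ applies $\ell_n$, giving $\pi_{n-1} \in \Pi_{n-1}$; here $n-1$ is even, so by (ii) $\pi_{n-1} \in D_{n-1}$. The arrow $\id_{[n-1]\times D_{n-1}} \oplus \a_{n-1}\inv$ then applies $\a_{n-1}\inv$ (which for $n-1$ even has domain $E_{n-1} \cup \Pi_{n-1}$), fixing $\pi_{n-1}$ by the inductive hypothesis and landing in $D_{n-1}$. Next $g_n\inv$ prepends the first coordinate $n$ to obtain $(n,\pi_{n-1})$, and $f_n(n,\pi_{n-1})$ appends the one-cycle $(n)$ to $\pi_{n-1}$, returning $\pi_{n-1}(n) = \pi_n$. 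Hence $\a_n(\pi_n) = \pi_n$. The statement for $\a_n\inv$ is obtained by the identical bookkeeping run through the reverse composition (facts (i) and (ii) apply in the same way); alternatively, since $\a_n\inv$ is the two-sided inverse of $\a_n$, $\a_n(\pi_n) = \pi_n$ gives $\a_n\inv(\pi_n) = \pi_n$ at once.

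The only point requiring care — and the sole potential obstacle — is the branch bookkeeping: at each arrow one must confirm that the current image of $\pi_{n-1}$ sits in the summand ($D_{n-1}$, $E_{n-1}$, or $\Pi_{n-1}$) on which the intended branch of the $\oplus$-map acts. Facts (i) and (ii) make this mechanical, so once they are established the proof is a routine unwinding of the recursion.
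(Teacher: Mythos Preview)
Your proof is correct and follows essentially the same route as the paper's: induction on $n$, with the even and odd inductive steps tracing $\pi_n$ through the chain of maps exactly as the paper does, and the statement for $\a_n\inv$ deduced from $\a_n(\pi_n)=\pi_n$. Your explicit isolation of facts (i) and (ii) and the careful tracking of which summand each intermediate image lies in are, if anything, a bit more thorough than the paper's own write-up.
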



\begin{thm}\label{thm:desc}
The combinatorial description of $\a_n$ matches the recursive definition of $\a_n$.
\end{thm}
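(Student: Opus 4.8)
The statement asserts that the explicit "trailing transpositions" description of $\a_n$ in Section~\ref{sec:description} agrees with the recursively defined map. The natural approach is induction on $n$, since $\a_n$ is itself defined recursively in terms of $\a_{n-1}\inv$ (for $n$ even) or $\a_{n-1}$ (for $n$ odd). The base cases $n=0,1$ are the identity on singletons and match trivially. For the inductive step I would fix $\s \in D_n$ with $\s \neq \pi_n$, locate the smallest $j$ with $\s = (\cdots)(j\ \ j{+}1)\cdots(n{-}1\ \ n)$, and trace $\s$ through the four-arrow composition defining $\a_n$, applying the inductive hypothesis at the middle arrow where $\a_{n-1}\inv$ (or $\a_{n-1}$) appears.

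The key bookkeeping step is to understand how the "smallest $j$" statistic transforms under $\ph_n$. If $n$ is in a trailing transposition $(n{-}1\ \ n)$ of $\s$ (equivalently $j \le n-1$, or more precisely $\s\setminus n \in E_{n-1}$), then $\ph_n(\s) = \s \setminus n \in E_{n-1}$, and we feed this to $\a_{n-1}\inv$. Here I would need to check that the fixed point $n-1$ of $\s\setminus n$, after applying $\a_{n-1}\inv$, produces a derangement of $[n-1]$ whose own trailing-transposition parameter is consistent with reinserting $n$ via $g_n\inv$ and $f_n$ to recover exactly the claimed image of $\s$. If $n$ is not in a trailing transposition ($\s\setminus n \in D_{n-1}$, i.e. $j = n+1$ or $j \le n-1$ with $n$ in a longer cycle — one must be careful that the "no trailing transpositions" case and the "$j-1 = n$" cases are handled), then $\ph_n(\s) = (\s(n), \s\setminus n)$, the identity is applied, and $g_n\inv$ then $f_n$ put $n$ back; here the parameter $j$ for $\s\setminus n \in D_{n-1}$ relates directly to that of $\s$. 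In each branch I then split further according to whether $j-1$ lies in a 2-cycle (Case 1) or a longer cycle (Case 2) of $\s$, and verify the displayed formula for $\a_n(\s)$.

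The main obstacle I expect is the careful matching of the two case-splits: the recursive definition branches on whether $\s\setminus n \in D_{n-1}$ versus $E_{n-1}$ (a statement about $n$'s cycle), while the combinatorial description branches on whether $j-1$ is in a 2-cycle versus a longer cycle (a statement about $j-1$'s cycle), and $j-1$ and $n$ coincide only in the boundary subcase. One has to show that applying $\a_{n-1}$ or $\a_{n-1}\inv$ shifts the trailing run of transpositions $(j\ \ j{+}1)\cdots(n{-}1\ \ n)$ down by one — turning it into $(j{-}1\ \ j)\cdots(n{-}2\ \ n{-}1)$ with the $(n)$ appearing or not according to parity — and correctly relocates the single "active" element $a$ (the non-trivial image of $j-1$) exactly as in the stated Case 1 / Case 2 formulas. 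The cleanest way to organize this is probably to verify directly that the combinatorially-described map, call it $\beta_n$, satisfies the same recursion that defines $\a_n$ — i.e.\ that $\beta_n = (f_n \oplus \id) \circ (g_n\inv \oplus \ell_n\inv) \circ (\id \oplus \beta_{n-1}\inv) \circ \ph_n$ and similarly for odd $n$ — by unwinding each arrow's effect on the cycle notation; then uniqueness of the solution to the recursion with the given base case forces $\beta_n = \a_n$. The remaining work is the routine but slightly tedious verification that reinserting/removing $n$ via $\ph_n^{\pm 1}$, $g_n^{\pm 1}$, $f_n$ has precisely the claimed local effect on the cycle diagram, together with tracking which elements exceed $n-1$ and get suppressed from the notation.
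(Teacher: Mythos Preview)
Your plan is essentially the paper's own approach: induction on $n$, with the inductive step tracing $\s$ through the chain $\ph_n \to (\id \oplus \a_{n-1}\inv) \to g_n\inv \to f_n$ and branching on whether $n$ sits in a 2-cycle. One point where the paper is slightly more explicit than your sketch: the middle arrow involves $\a_{n-1}\inv$, not $\a_{n-1}$, and the inductive hypothesis as stated describes the forward map; rather than invert the combinatorial description directly, the paper unwinds $\a_{n-1}\inv$ via its own four-arrow definition (through $f_{n-1}\inv$, $g_{n-1}$, $\a_{n-2}$, $\ph_{n-1}\inv$) and applies the hypothesis at level $n-2$, which is why the nontrivial base case is taken at $n=3$ and the subcase $\s(n)=n-1$ splits further into 1a/1b according to the shape of $\s\setminus\{n-1,n\}$. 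Your alternative of checking that the combinatorial map $\beta_n$ satisfies the defining recursion is equivalent and would hit the same need for $\beta_{n-1}\inv$, which you do acknowledge.
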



 Note that the special case for sending $\pi_n$ to itself does not overlap with the other cases. Suppose $\s \in D_n$, $\s \neq \pi_n$. In Case 1 of the combinatorial description, the resulting permutation always has a cycle of length at least 3. In Case 2, the resulting permutation has a transposition $(j \,\, a)$, where $a = \sigma(j-1) < j-1$, so $(j\,\, a)$ is not a simple transposition. Thus neither of the two cases can yield a permutation which has only simple transpositions in its disjoint cycle decomposition. 



\subsection{Comparing with similar maps}

The bijection presented in \cite{elizalde2020simple} sends derangements in $D_n$ to permutations in $E_n$, the set of permutations with exactly one fixed point, via the following map $\psi$:\\

Let $\sigma \in D_n$, and let $k$ be the largest non-negative integer such that the disjoint cycle notation of $\sigma$
starts with $(1, 2)(3, 4) \ldots (2k - 1, 2k)$. Then,
\begin{enumerate}[(i)]
\item If the cycle containing $2k + 1$ has at least 3 elements, then $\s$ and $\psi(\s)$ are as follows:
\al{
\sigma &= (1, 2)(3, 4). . . (2k - 1, 2k)(2k + 1, a_1, a_2, . . . , a_j ). . . \\
\psi(\sigma) &= (1)(2, 3)(4, 5) . . . (2k, a_1)(2k + 1, a_2, . . . , a_j ). . .
}
Where if $k = 0$, then $\{1, 2, . . . , 2k\} = \emptyset$ and the fixed point in $\psi(\sigma)$ is $a_1$.
\item Otherwise, $\s$ and $\psi(\s)$ are as follows:
\al{
\sigma = (1, 2)(3, 4). . . (2k - 1, 2k)(2k + 1, a_1)(2k + 2, a_2, . . . , a_j ). . .\\
\psi(\sigma) = (1)(2, 3)(4, 5) . . . (2k, 2k + 1)(2k + 2, a_1, a_2, . . . , a_j ). . .
}
\end{enumerate}

The map $\psi$ is conjugate to the map $\a_n$ by an involution on $\S_n$. To see this, consider a derangement $\s$.
If we ``element-reverse'' the derangement (that is, swap $k$ for $n-k+1$ for all $k$ in the disjoint cycle decomposition of $\s$), then apply $\psi$, and then element-reverse again, we obtain $\a_n(\s)$. So we see that the combinatorial proof in \cite{elizalde2020simple}
can be derived from the combinatorial proof for the identity (\ref{eqn:new}).

The map described in \cite{REMMEL1983371} also operates similarly on $\s$ in the case that $n$ appears in a cycle of length at least $3$. In this case, $\s$ is mapped to the pair $(i, \s \setminus n)$ where $i$ is the position of $n$ in the word $W(\s)$, defined in \cite{REMMEL1983371}. This is similar to $f_n\inv \circ \a_n$ in this case: For $\a_n$, if $n$ appears in a cycle of length at least 3, then $\s(n)$ is removed from the cycle it appears in and becomes fixed. After applying $f_n\inv$, the permutation is sent to $(\s(n), \s \setminus n)$. So for this case, both maps essentially remove $n$ from the cycle decomposition and record where it appeared.


In \cite{GESSEL1993189}, Gessel and Reutenauer show that for a given set $A$ which is properly contained in $[n-1]$, the
number of derangements in $\S_n$ with descent set $A$ is equal to the number of
permutations in $\S_n$ with exactly one fixed point and descent set $A$. 
This result also follows via inclusion-exclusion from bijections given by D\'esarm\'enien and Wachs in \cite{DESARMENIEN1993311}, and a bijective proof can be extracted from this. It would be interesting to find a direct bijection showing this result.


\section{An involution on $\S_n$}\label{involution}
Using 
the map $\a_n$, we can extend to permutations with one fixed point as follows:
First define $\gamma_n: E_{n} \to D_{n+1}$ to be the map which sends to $\s$ to $\s (n+1 \,\,\, m)$, where $m$ is the unique fixed point of $\s$.
Then let
\begin{align*}
    \lambda_n: D_n \cup E_n &\ra D_n \cup E_n \\
    \s &\mapsto \begin{cases}
        \a_n(\s) & \text{ if } \s \in D_n \\
        \a_{n+1}(\gamma_n(\s)) \setminus (n+1) & \text{ if } \s \in E_n.
    \end{cases}
\end{align*}





The restriction of $\l_n$ to $D_n$ is the map $\a_n$, and applying $\l_n$ to a permutation $\s \in E_n$ amounts to changing $\s$ into a derangement by adding $n+1$ as a placeholder, applying $\a_{n+1}$, and then removing $n+1$ from the resulting permutation. If $\sigma = \pi_n$, then $\lambda_n$ sends $\s$ to itself. Rephrasing the combinatorial description of $\a_n$ gives a combinatorial description of $\lambda_n(\s)$ for a general $\s \in D_n \cup E_n$. 

\begin{thm} \label{thm:invol}
Let $\s \in D_n \cup E_n$, with $\s \neq \pi_n$. If $\s \in D_n$, then $\lambda(\s) \in E_n$. If $\s \in E_n$, then $\lambda(\s) \in D_n$. Also, $\l_n(\l_n(\s)) = \s$.
\end{thm}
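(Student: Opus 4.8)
The plan is to reduce the whole statement to a single claim: $\l_n$ restricted to $D_n$ is $\a_n$ (this is the definition), $\l_n$ restricted to $E_n\setminus\{\pi_n\}$ is $\a_n\inv$, and $\l_n(\pi_n)=\pi_n$. Granting this, all three assertions are immediate consequences of the fact --- recorded when $\a_n$ was introduced in Section~\ref{definingmaps}, together with Lemma~\ref{lem:pimap} --- that $\a_n$ is a bijection carrying $D_n$ (and $\Pi_n$, when $n$ is even) onto $E_n$ (and $\Pi_n$, when $n$ is odd) and fixing $\pi_n$. Indeed, if $\s\in D_n$ with $\s\neq\pi_n$, then $\l_n(\s)=\a_n(\s)$ lies in $E_n$ (it is not the formal element $\pi_n$, by injectivity of $\a_n$, and in any case $\pi_n\notin E_n$ when $n$ is even), and $\l_n(\l_n(\s))=\a_n\inv(\a_n(\s))=\s$; the case $\s\in E_n\setminus\{\pi_n\}$ is symmetric, with $\l_n(\s)=\a_n\inv(\s)\in D_n\setminus\{\pi_n\}$; and $\l_n(\l_n(\pi_n))=\l_n(\pi_n)=\pi_n$.

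The content is therefore the identity $\l_n(\s)=\a_n\inv(\s)$ for $\s\in E_n$ with $\s\neq\pi_n$, which I would prove by tracing $\a_{n+1}(\gamma_n(\s))$ through the recursive definition of $\a_{n+1}$. Let $m$ be the unique fixed point of $\s$, so that in $\gamma_n(\s)=\s\,(n+1\ m)$ the elements $n+1$ and $m$ form a transposition. The steps are: (1) since $n+1$ lies in a transposition of $\gamma_n(\s)$, the map $\ph_{n+1}$ acts by its second case, returning $\gamma_n(\s)\setminus(n+1)=\s\in E_n$; (2) since $\s\neq\pi_n$, the map $\id\oplus\a_n\inv$ then sends $\s$ to $\a_n\inv(\s)\in D_n$ --- this is precisely where the hypothesis $\s\neq\pi_n$ is used, as for $\s=\pi_n$ one would have $\a_n\inv(\pi_n)=\pi_n\in\Pi_n$, not $D_n$; (3) $g_{n+1}\inv$ sends $\a_n\inv(\s)\in D_n$ to the pair $(n+1,\,\a_n\inv(\s))$; and (4) $f_{n+1}$ sends $(n+1,\,\a_n\inv(\s))$ to $\a_n\inv(\s)$ with the one-cycle $(n+1)$ appended. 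Hence $\a_{n+1}(\gamma_n(\s))=\a_n\inv(\s)\,(n+1)$, and deleting the fixed point $n+1$ yields $\l_n(\s)=\a_n\inv(\s)\in D_n$. The even-$n$ and odd-$n$ forms of the definition of $\a_{n+1}$ lead to the same computation; the $\Pi_{n+1}$ summand and the maps $\ell_{n+1}^{\pm1}$ are never reached, since $\gamma_n(\s)\neq\pi_{n+1}$ (automatic when $n$ is even, because $\gamma_n(\s)\in D_{n+1}$ while $\pi_{n+1}\notin D_{n+1}$; and, when $n$ is odd, a consequence of injectivity of $\gamma_n$ together with $\gamma_n(\pi_n)=\pi_{n+1}$).

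Finally, $\l_n(\pi_n)=\pi_n$: when $n$ is even, $\pi_n\in D_n$ and $\l_n(\pi_n)=\a_n(\pi_n)=\pi_n$ by Lemma~\ref{lem:pimap}; when $n$ is odd, $\pi_n\in E_n$, a short computation gives $\gamma_n(\pi_n)=\pi_{n+1}$, and then $\l_n(\pi_n)=\a_{n+1}(\pi_{n+1})\setminus(n+1)=\pi_{n+1}\setminus(n+1)=\pi_n$, again by Lemma~\ref{lem:pimap}. I expect the main obstacle to be the bookkeeping in step two: correctly identifying which branch of each of $\ph_{n+1}$, $g_{n+1}\inv$, $f_{n+1}$ and each direct summand is taken on the element at hand, and checking that the parity split in the definition of $\a_{n+1}$ does not alter the output. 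No genuinely new idea is needed beyond the observation that $\gamma_n$ plants $n+1$ in a transposition --- exactly the configuration on which $\ph_{n+1}$ undoes $\gamma_n$ --- which collapses the entire composition to a single application of $\a_n\inv$.
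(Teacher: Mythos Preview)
Your proof is correct and takes a genuinely different route from the paper's. The paper proves the theorem by invoking the combinatorial description of $\a_n$ established in Theorem~\ref{thm:desc}: it rewrites $\l_n$ explicitly in terms of the tail-pattern $\pi_{N,j}$, then carries out a four-way case analysis (on whether $\s\in D_n$ or $E_n$, and on whether $j-1$ lies in a $2$-cycle), checking in each case both that $\l_n(\s)$ lands in the correct set and that $\l_n(\l_n(\s))=\s$. Your argument bypasses Theorem~\ref{thm:desc} entirely and works purely from the recursive definition of $\a_{n+1}$: because $\gamma_n$ places $n+1$ in a transposition, $\ph_{n+1}$ simply undoes $\gamma_n$, and the remaining layers of $\a_{n+1}$ collapse to a single application of $\a_n\inv$ followed by adjoining and then stripping the one-cycle $(n+1)$. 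This yields $\l_n|_{E_n\setminus\{\pi_n\}}=\a_n\inv$, from which the involution property is immediate. Interestingly, the paper \emph{derives} the identity $\a_n\inv=\l_n|_{E_n}$ as a corollary of Theorem~\ref{thm:invol}, whereas you use it as the engine of the proof. Your approach is shorter and more structural; the paper's has the compensating virtue of making the action of $\l_n$ on cycle structure fully explicit. One small slip to fix: in your first paragraph the parenthetical ``(and $\Pi_n$, when $n$ is even)'' for the domain of $\a_n$ has the parities reversed---$\Pi_n$ is adjoined to the domain when $n$ is odd and to the codomain when $n$ is even---though this does not affect the argument.
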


\begin{examples}[$n=5$] \hfill
\begin{itemize}
\item 
$\lambda_5((12)(345))$ :      The permutation $(12)(345)$ is in $D_5$, so we apply $\a_5$. This goes to $(12)(3)(45)$.

\item
$\lambda_5((12)(3)(45))$:     First we change the permutation to $(12)(36)(45)$, adding the 6 as a placeholder. Then we apply $\a_6$ and remove the 6. The permutation gets sent to $(12)(345)$.

\item
$\lambda_5((1234)(5))$: First we change the permutation to $(1234)(56)$ and then apply $\a_6$ and remove the 6. This goes to $(15)(234)$.

\item $\lambda_5((15)(234))$:  The permutation $(15)(234)$ is in $D_5$, so we apply $\a_5$. This goes to $(1234)(5)$.
\end{itemize}
\end{examples}

Having defined $\lambda_n$, we can now directly describe $\a_n\inv$. Let $\tau \in E_n$, $\tau \neq \pi_n$. Since $\a_n$ is equivalent to $\l_n \mid_{D_n \pm \Pi_n}$, and $\l_n$ is an involution, it follows that $\a_n\inv = \l_n \mid_{E_n \mp \Pi_n}$. So we have that $\a_n\inv(\tau) =\l_n(\tau) $ for $\tau \in E_n$.



\section{Maps of nonderangements} \label{nonderangements}

Let $\overline{D}_n$ denote $\S_n \setminus D_n$, the set of nonderangements of $[n]$. Similarly let $\overline{E}_n$ denote $\S_n \setminus E_n$, the set of permutations of $[n]$ which do not have exactly one fixed point.
Having found the bijection $\a_n: D_n \ra E_n (\pm \Pi_n)$, where the $\pm \Pi_n$ indicates that the element $\pi_n$ is either added to the codomain when $n$ is even, or removed when $n$ is odd, we can also define
\al{
\overline{\a}_n: \overline{D}_n &\ra \overline{E}_n \,\, (\mp \Pi_n) \\
\s &\mapsto \begin{cases}
\s & \text{ if } \s \in \overline{E}_n \\
\a_n\inv(\s) & \text{ if } \s \in E_n,
\end{cases}
}
where the $\mp \Pi_n$ similarly represents the addition or removal of $\pi_n$ from the codomain, depending on the parity of $n$.
We have $\pi_n \in E_n \subseteq \overline{D}_n$ only when $n$ is odd. In this case, $\pi_n$ is fixed and we have $\overline{E}_n \cup \Pi_n$ as the image of $\overline{\a}_n$. If $n$ is even, then $\pi_n \in D_n \subseteq \overline{E}_n$. However, there is nothing in $\overline{D}_n$ that maps to $\pi_n$, so in this case the image of $\overline{\a}_n$ is $\overline{E}_n \setminus \Pi_n$.

Let $\overline{d}_n = | \overline{D}_n| = n! - d_n$. Subtracting the equation (\ref{eqn:oneterm}) from the equation $n! = n(n-1)!$, we have
\begin{align}
    n! - d_n &= n(n-1)! - n d_{n-1} - (-1)^n \nonumber\\
&= n( (n-1)! - d_{n-1}) - (-1)^n \nonumber\\
\overline{d}_n&= n \overline{d}_{n-1} - (-1)^{n}. \label{eqn:nond}
\end{align}

We would like to obtain a bijection showing the 1-term identity (\ref{eqn:nond}).
We have just found $\overline{\a}_n: \overline{D}_n \ra \overline{E}_n \,\, (\mp \Pi_n)$. It remains to find a bijection $\zeta_n: \overline{E}_n \ra [n] \times \overline{D}_{n-1}$. Then composing the maps yields a bijection from $\overline{D}_n$ to $[n] \times \overline{D}_{n-1} \,\, (\mp \Pi_n)$. 

In Section \ref{recurrences}, we defined a bijection $f_n\inv: E_n \ra [n] \times D_{n-1}$. The desired bijection $\zeta_n$ should map $\S_n \setminus E_n$ to $[n] \times \S_{n-1} \setminus ([n] \times D_{n-1})$.
This can be accomplished by subtracting $ f_n\inv$ from a bijection from $\S_n$ to $[n] \times \S_{n-1}$, as in \cite{doyle:lrs}, or equivalently, using the involution principle, as follows.
Let 
\al{
H_n: \S_n &\ra [n] \times \S_{n-1} \\
\s &\mapsto (\s(n), \s \setminus n).
}
This map records where $n$ is in the disjoint cycle decomposition of $\s$, and then removes $n$ from the cycle in which it appears. This is a bijection since if we are given a pair $(m, \s')$, we can recover the preimage by inserting $n$ into the cycle of $\s'$ containing $m$, right before $m$.


To see what $\zeta_n$ turns out to do,
let $\s \in \overline{E}_n$. Applying $H_n$ sends $\s$ to $(\s(n), \s \setminus n)$. If $(\s(n), \s \setminus n) \in [n] \times \overline{D}_{n-1}$, then either $\s$ was not a derangement, or $\s$ was a derangement with $n$ in a cycle of length 2. 
Otherwise $\s \setminus n \in {D}_{n-1}$. This implies either $\s$ was a derangement or $\s$ had only $n$ as a fixed point. However, the latter case cannot have happened since $\s \in \overline{E}_n$. Then $\s$ was a derangement, where $n$ must have appeared in a cycle of length at least 3. In this case we send the pair $(\s(n), \s \setminus n)$ through the map $f_n$. This yields the permutation given by replacing $\s(n)$ with $n$ in the disjoint cycle decomposition and fixing $\s(n)$. So if $\s = (\cdots) (\cdots n \,\,\, \s(n) \,\,\, \s(\s(n)) \cdots)$, we now have $f_n(H_n(\s)) = (\cdots) (\cdots n \,\,\, \s(\s(n)) \cdots)(\s(n))$. This permutation is then sent through $H_n$ again, to obtain the pair $(\s(\s(n)), \s \setminus \{n, \s(n)\} \circ (\s(n))) \in [n] \times \overline{D}_{n-1}$. Collecting these results, we have
\al{
\zeta_n: \overline{E}_n &\ra [n] \times \overline{D}_{n-1} \\
\s &\mapsto \begin{cases}
(\s(n), \s \setminus n) &\text{ if $\s \in \overline{D}_n$ or $\s \in D_n$ with $n$ in a 2-cycle} \\
(\s(\s(n)),\s \setminus \{n, \s(n)\} \circ (\s(n)) ) &\text{ if $\s \in D_n$ with $n$ in a cycle of length $\geq 3$.}
\end{cases}
}
Finally we define 
$Z_n: \overline{D}_n \ra [n] \times \overline{D}_{n-1} \mp \Pi_n$ to be $\zeta_n \circ \overline{\a}_n$. Tracing through possible cases, we can directly write down $Z_n$ as follows:
\al{
Z_n: \overline{D}_n &\ra [n] \times \overline{D}_{n-1} \mp \Pi_n \\
\s &\mapsto \begin{cases}
(\s(n), \s \setminus n) &\text{ if $\s$ has a fixed point $\neq n$} \\
(\a_n\inv(\s)(n), \a_n\inv(\s) \setminus n) &\text{ if $n$ is the unique fixed point of $\s$.}
\end{cases}
}
This description of $Z_n$ is shown in the proof of Theorem \ref{thm:subtr}, found in Appendix \ref{proofs}.

The recurrence (\ref{eqn:nond}) is also proven bijectively in \cite{serginon}, via a map $\theta$. This map takes elements of $\overline{D}_n$ to $\mathcal{M}_n$, which they define as the set of permutations that have a marked fixed point and at least one unmarked fixed point. It is not immediately clear if there is a direct way to translate between the map $\theta$ and $Z_n$. 

Lastly, we note that the map $Z_n$ is a composition of maps found via subtraction, and we could have instead directly subtracted the map $A_n$ defined in Section \ref{definingmaps} from $H_n$. This yields a map $Y_n = H_n \setminus A_n$ equivalent to the composition $Z_n$.

\begin{thm} \label{thm:subtr}
    The map $Y_n$ obtained by subtracting $A_n$ from $H_n$ is equivalent to $Z_n$. That is,
$
Y_n = H_n \setminus A_n = H_n \setminus (f_n\inv \circ \a_n) = (H_n \setminus f_n\inv) \circ \overline{\a}_n = (H_n \setminus f_n\inv) \circ (\id \setminus \a_n) = \zeta_n \circ \overline{\a}_n = Z_n.
$
\end{thm}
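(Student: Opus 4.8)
The plan is to verify the chain of equalities in the statement one link at a time, treating "$\setminus$" (subtraction of bijections) as an operation governed by the involution principle, and reducing everything to the \emph{case analysis} already laid out in Section~\ref{nonderangements}. The first equality $Y_n = H_n \setminus A_n$ is the definition of $Y_n$, and $A_n = f_n\inv \circ \a_n$ is (for both parities, after the harmless identification of $\Pi_n$-components) exactly how $A_n$ was defined at the end of Section~\ref{definingmaps}, so the second equality is immediate. The substance is in the middle three equalities.

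For the step $H_n \setminus (f_n\inv \circ \a_n) = (H_n \setminus f_n\inv) \circ \overline{\a}_n$, I would argue that subtracting a \emph{composite} bijection whose inner factor $\a_n$ is itself a bijection (onto the relevant sub-block of the codomain of $H_n$) factors: removing the image of $f_n\inv\circ\a_n$ from $H_n$ is the same as first re-indexing the domain by $\a_n$ where it applies (which is precisely what $\overline{\a}_n = \id \setminus \a_n$ does on $\overline{D}_n$, acting as the identity on $\overline{E}_n$ and as $\a_n\inv$ on $E_n$) and then removing the image of $f_n\inv$ alone. Concretely, I would check that on each of the two pieces of $\overline{D}_n$ — the permutations with a fixed point other than $n$, and the permutations whose unique fixed point is $n$ — the two sides agree by direct computation: on the first piece $\overline{\a}_n$ is the identity and both sides give $H_n(\s)=(\s(n),\s\setminus n)$; on the second piece $\overline{\a}_n(\s)=\a_n\inv(\s)\in\overline{D}_n$ is no longer in $E_n$, so $(H_n\setminus f_n\inv)$ acts as plain $H_n$ on it, giving $(\a_n\inv(\s)(n),\a_n\inv(\s)\setminus n)$ — which is exactly the second case in the explicit formula for $Z_n$. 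The equality $(H_n \setminus f_n\inv)\circ\overline{\a}_n = (H_n\setminus f_n\inv)\circ(\id\setminus\a_n)$ is then just the definition of $\overline{\a}_n$ recalled in Section~\ref{nonderangements}, and $(H_n\setminus f_n\inv)=\zeta_n$ is exactly the derivation of $\zeta_n$ carried out there (via the two applications of $H_n$ and the intermediate $f_n$ in the length-$\geq 3$ case), so $(H_n\setminus f_n\inv)\circ\overline{\a}_n=\zeta_n\circ\overline{\a}_n=Z_n$.

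The cleanest way to organize all of this is to compute $Y_n(\s)$ directly from the involution-principle description of $H_n\setminus A_n$ and show it matches the boxed two-case formula for $Z_n$: start with $\s\in\overline{D}_n$, apply $H_n$ to get $(\s(n),\s\setminus n)\in[n]\times\S_{n-1}$; if this lies outside the image of $A_n=f_n\inv\circ\a_n$ we stop, otherwise we bounce back via $A_n\inv=\a_n\inv\circ f_n$ and repeat. The image of $A_n$ inside $[n]\times\S_{n-1}$ is $[n]\times D_{n-1}$ (up to the $\Pi_n$ bookkeeping), so the bounce happens exactly when $\s\setminus n\in D_{n-1}$, i.e.\ when $\s$ is a derangement or when $n$ is the unique fixed point of $\s$; the derangement case is handled by $\overline{\a}_n$ already being defined only on $\overline{D}_n$ and never producing a derangement, and the "$n$ is the unique fixed point" case is precisely where $\a_n\inv$ gets invoked. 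This makes the parity bookkeeping with $\Pi_n$ the only delicate point.

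The main obstacle I anticipate is handling the $\mp\Pi_n$ correction terms consistently across all the subtractions: $\a_n$ has $\Pi_n$ added to (odd $n$) or removed from (even $n$) one side, $\overline{\a}_n$ inherits the opposite correction, and one must make sure that when these partial bijections are composed and subtracted from the genuine bijection $H_n$, the singleton $\Pi_n$ is tracked on the correct side at every stage so that $Y_n$ and $Z_n$ end up with \emph{the same} codomain $[n]\times\overline{D}_{n-1}\mp\Pi_n$. I would dispatch this by treating $\pi_n$ as a fixed point of every map in sight (using Lemma~\ref{lem:pimap}) and checking the two parities separately at the end, which localizes the issue to a one-element verification. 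Everything else is a routine unwinding of definitions, so the write-up can afford to be terse once the case split is set up.
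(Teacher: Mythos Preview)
Your proposal has a genuine gap in the case analysis. You split $\overline{D}_n$ into ``$\sigma$ has a fixed point other than $n$'' versus ``$n$ is the unique fixed point of $\sigma$,'' and assert that on the first piece $\overline{\a}_n$ is the identity. This is false: if $\sigma\in E_n$ has its unique fixed point $m\neq n$, then $\overline{\a}_n(\sigma)=\a_n\inv(\sigma)$ is a derangement $\tau$, not $\sigma$ itself. One must then compute $\zeta_n(\tau)$, and since $n$ lies in a cycle of length at least $3$ in $\tau$ (not a $2$-cycle), $\zeta_n$ does \emph{not} act as plain $H_n$ here --- it uses the second, more involved branch $(\tau(\tau(n)),\,\tau\setminus\{n,\tau(n)\}\circ(\tau(n)))$. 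The paper's proof does real work at this point: using the combinatorial description of $\a_n$ from Section~\ref{sec:description}, it shows that $\a_n\inv$ inserts $m$ into the cycle of $n$ immediately after $n$, so that $\tau(n)=m$ and $\tau(\tau(n))=\sigma(n)$, whence this complicated expression collapses to $(\sigma(n),\sigma\setminus n)$.

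There is a second, related omission in your treatment of the piece where $n$ is the unique fixed point. You claim $(H_n\setminus f_n\inv)$ acts as plain $H_n$ on $\tau=\a_n\inv(\sigma)$, but this requires knowing that $n$ sits in a $2$-cycle of $\tau$ (otherwise $H_n(\tau)$ lands in $[n]\times D_{n-1}$ and $\zeta_n$ bounces). The paper isolates and proves exactly this equivalence --- $n$ is in a transposition in $\a_n\inv(\sigma)$ if and only if $n$ is the fixed point of $\sigma$ --- again via the explicit description of $\a_n$. Your abstract ``subtraction of a composite factors'' heuristic is essentially a restatement of the theorem and cannot replace this structural input; without invoking the combinatorial description of $\a_n$, neither of the two $E_n$-subcases goes through.
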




\appendixtitleon
\begin{appendices}

\section{Proofs}

\label{proofs}

\begin{L1}
Let $\pi_n \in \S_n$ be as defined earlier.  For all $n \geq 1$, $\a_n$ and $\a_n\inv$ fix $\pi_n$.
\end{L1}

\begin{proof}[Proof of Lemma \ref{lem:pimap}]
We will prove the result by induction. For the base cases $n = 0,1$, we see that each of $\a_0$, $\a_1$, and their inverses map $\pi_n$ to itself by definition.
Now suppose for induction that $\a_{n-1}(\pi_{n-1}) = \pi_{n-1}$ and $\a_{n-1}\inv(\pi_{n-1}) = \pi_{n-1}$.

If $n$ is even, $\a_n$ first sends $\pi_n$ though $\ph_n$, which sends $\pi_n$ to $\pi_n \setminus n$. This is the same permutation as $\pi_{n-1}$, and since $n$ is even, $\pi_{n-1} \in E_n$.
Then this is sent through $\a_{n-1}\inv$. By the inductive hypothesis, $\a_{n-1}\inv$ fixes $\pi_{n-1}$, 
so this is then sent through $\ell_n\inv$, which takes $\pi_{n-1}$ to $\pi_n$. Finally, $\pi_n$ is then sent through the identity, so we obtain $\a_n(\pi_n) = \pi_n$ for $n$ even.


Now suppose $n$ is odd. In this case, $\a_n$ first sends $\pi_n$ through $\ell_n$, to obtain $\pi_{n-1}$. This is then sent through $\a_{n-1}\inv$, which fixes $\pi_{n-1}$ by the inductive hypothesis. Then $\pi_{n-1}$ is mapped via $g_n\inv$ to the pair $(n, \pi_{n-1})$, which is then sent through $f_n$, which just adds back the 1-cycle $n$ to $\pi_{n-1}$, resulting in the permutation $\pi_n$. So $\a_n(\pi_n) = \pi_n$ when $n$ is odd.



Since $\a_n(\pi_n) = \pi_n$ for all $n \geq 1$, it follows that $\a_n\inv(\pi_n) = \pi_n$ as well.
Thus the result holds by induction. 
\end{proof}

\begin{T1}
The combinatorial description of $\a_n$ matches the recursive definition of $\a_n$.
\end{T1}
\begin{proof}[Proof of Theorem \ref{thm:desc}]
Let $n\geq 1$. 
First consider $\pi_n$, which is the product of simple transpositions if $n$ is even, or the product of simple transpositions with the 1-cycle $(n)$ if $n$ is odd. The bijection maps $\pi_n$ to itself always, by Lemma \ref{lem:pimap}. 

Now, let $\sigma \in D_n$, $\sigma \neq \pi$.
We will show the result by induction. 

The smallest base case where $D_n$ or $D_n \cup \Pi_n$ contains an element other than $\pi_n$ is when $n=3$. We have $D_3 \cup \Pi_3 = \{ (123), (132), (12)(3)\}$. Applying $\a_3$ to each of these elements, we have
\al{
(123) \xrightarrow{\ph_n} (1, (12)) \ra (1, (12)) \ra (1, (12)) \ra (1, (12)) \xrightarrow{f_n} (1)(23) = \a_3((123)), \\
(132) \xrightarrow{\ph_n} (2, (12)) \ra (2, (12)) \ra (2, (12)) \ra (2, (12)) \xrightarrow{f_n} (13)(2) = \a_3((132)).
}
And by the lemma, $\a_3((12)(3)) = \a_3(\pi_3) = (12)(3)$, which matches the combinatorial description on $\pi_n$.

To see that the mapping for the other elements of $D_3 \cup \Pi_3$ matches the combinatorial description, for $(123)$, there is no pattern of simple transpositions at the end, so $j = 4$. Then $j-1 = 3$, which appears in a cycle of length greater than 2, so this is in case two of the description, with $a = 1$. Then $\a_n((123)) = (1)(23)$. Similarly, we can check that the combinatorial description maps $(132)$ to $(13)(2)$, so the base case holds.

Now suppose for induction that the combinatorial description matches for $\a_{n-1}$. Then, when sending $\s \in D_n$ through the map $\a_n$, we first apply $\ph_n$. If $\s$ had $n$ in a cycle of length greater than 2, then $\s$ is sent to $(\s(n), \s \setminus n)$, which remains fixed until we apply the map $f_n$. This sends $(\s(n), \s \setminus n)$ to the permutation obtained from $\s \setminus n$ by replacing $\s(n)$ with $n$, and then fixing $\sigma(n)$. This is equivalent to pulling $\sigma(n)$ out of the disjoint cycle decomposition of $\sigma$ and fixing it.

To see that this matches the combinatorial description, if $n$ is in a cycle of length greater than 2, then there is no pattern of simple transpositions at the end of $\s$, so we let $j = n+1$. Then $j-1 = n$, which is in a cycle of length greater than 2, so we apply case two of the description. Then $\s = (\cdots n \,\, \s(n))$ is sent to $(\cdots n \cdots) (\s(n))$, which is the same as the permutation obtained from $\s$ by removing $\s(n)$ from the disjoint cycle decomposition of $\s$ and fixing it.

Otherwise if $n$ appeared in a 2-cycle in $\s$, then upon applying $\ph_n$, $\s$ is sent to $\s \setminus n \in E_n$, which is then sent through $\a_{n-1}\inv$. 

We now check where $\a_{n-1}\inv$ sends this element. First, this sends $\s \setminus n$ through $f_{n-1}\inv$, which gives an ordered pair whose first coordinate is the fixed point $\s(n)$, and for the second coordinate, gives the permutation obtained from $\s \setminus n$ by swapping the fixed point, $\s(n)$, with $n-1$, and then removing $(n-1)$.

Case 1. If $\s(n)$ was $n-1$, then $\s$ was of the form $(\cdots) (n-1 \,\, n)$.
Applying $f_{n-1}\inv$ to $\s \setminus n$ yields the pair $(n-1, \s \setminus \{n, n-1\}) \in [n-1] \times D_{n-2}$. The next step in the definition of $\a_{n-1}\inv$ is to apply $g_{n-1}$ to this, which in this case sends the pair to $ \s \setminus \{n, n-1\} \in D_{n-2}$. Following the next arrow in the definition of $\a_{n-1}\inv$, we send this through $\a_{n-2}$. 

Case 1a. If $\s \setminus \{n, n-1\}$ is of the form $$(\cdots)(\cdots j-2 \,\,\, b \cdots ) (j-1 \,\,\, a) (j \,\,\, j+1) \cdots (n-3\,\,\, n-2),$$ then by the inductive hypothesis, 
$$\a_{n-2}(\s \setminus \{n, n-1\}) = (\cdots)(\cdots j -2 \,\,\, a \,\,\, b \cdots ) (j-1 \,\,\, j) (j+1 \,\,\, j+2) \cdots (n-4\,\,\, n-3)(n-2),$$
with any elements above $n-2$ excluded. Then $\a_{n-1}\inv$ finally sends this through $\ph_{n-1}\inv$, which just adds $n-1$ to the cycle containing the fixed point. So we have 
$$\a_{n-1}\inv(\s \setminus n) = (\cdots)(\cdots j -2 \,\,\, a \,\,\, b \cdots ) (j-1 \,\,\, j) (j+1 \,\,\, j+2) \cdots (n-2 \,\, n-1) \in D_{n-1}$$
in this case. 

Case 1b. If instead $\s \setminus \{n, n-1\}$ was of the form 
$$(\cdots) (\cdots j-1 \,\,\, a \,\,\, b \cdots ) (j \,\,\, j+1) \cdots (n-3\,\,\, n-2),$$ 
then by the inductive hypothesis,
$$\a_{n-2}(\s \setminus \{n, n-1\}) = (\cdots)(\cdots j-1 \,\,\, b \cdots ) (j \,\,\, a) (j+1 \,\,\, j+2) \cdots (n-4\,\,\, n-3)(n-2),$$
with any elements above $n-2$ excluded. Then $\a_{n-1}\inv$ finally sends this through $\ph_{n-1}\inv$, which again just adds $n-1$ to the cycle containing the fixed point. So we have 
$$\a_{n-1}\inv(\s \setminus n ) = (\cdots)(\cdots j-1 \,\,\, b \cdots ) (j \,\,\, a) (j+1 \,\,\, j+2) \cdots (n-2 \,\, n-1)  \in D_{n-1}$$
in this case. 

Next in the definition of $\a_n$, we apply $g_n\inv$, which sends $\a_{n-1}\inv(\s \setminus n )$ to the ordered pair with $n$ as the first coordinate and the same permutation $\a_{n-1}\inv(\s \setminus n)$ as the second coordinate, and then finally this is sent through $f_n$, which just adds the 1-cycle $(n)$ to the permutation $\a_{n-1}\inv(\s \setminus n )$.

So for a permutation $\s$ of the form $(\cdots) (n-1 \,\, n)$, we have that if 

$$\s = (\cdots)(\cdots j-2 \,\,\, b \cdots ) (j-1 \,\,\, a) (j \,\,\, j+1) \cdots (n-1\,\,\, n),$$ then
$$\a_{n}(\s) = (\cdots)(\cdots j -2 \,\,\, a \,\,\, b \cdots ) (j-1 \,\,\, j) (j+1 \,\,\, j+2) \cdots (n-2 \,\, n-1)(n),$$
and if
$$\s = (\cdots) (\cdots j-1 \,\,\, a \,\,\, b \cdots ) (j \,\,\, j+1) \cdots (n-1\,\,\, n),$$ 
then
$$\a_{n}(\s) = (\cdots)(\cdots j-1 \,\,\, b \cdots ) (j \,\,\, a) (j+1 \,\,\, j+2) \cdots (n-2 \,\, n-1)(n).$$


Case 2. If $\s(n) = a \neq n$, then $f_{n-1}\inv$ sends $\s \setminus n = (\cdots)(n-1 \,\, b \cdots) (a)$  to the pair $(a, (\cdots)(a \,\, b \cdots) ) \in [n-1] \times D_{n-2}.$ This is then fixed by $g_n$, fixed by the following arrow in the definition of $\a_{n-1}\inv$, and then sent through $\ph_{n-1}\inv$. This adds $n-1$ into the cycle containing $a$ right before $a$, to yield a derangement $(\cdots)(n-1 \,\, a \,\, b \cdots) \in D_{n-1}$. 

Next in the definition of $\a_n$, we apply $g_n\inv$, which sends this to the ordered pair with $n$ as the first coordinate and the same permutation $(\cdots)(n-1 \,\, a \,\, b \cdots) $ as the second coordinate, and then finally this is sent through $f_n$, which just adds the 1-cycle $(n)$ to the permutation  $(\cdots)(n-1 \,\, a \,\, b \cdots) $. So in this case, $\a_n$ sends a derangement of the form $(\cdots)(n-1 \,\, b \cdots )(a \,\, n)$ with $a \neq n-1$ to $(\cdots)(n-1 \,\, a \,\, b \cdots)(n)$. This matches Case 1 of the combinatorial description in Section \ref{sec:description}.


So in all cases, the recursive definition of $\a_n$ is equivalent to the combinatorial description; thus the result holds by induction.
\end{proof}

\begin{T2}
Let $\s \in D_n \cup E_n$, with $\s \neq \pi_n$. If $\s \in D_n$, then $\lambda(\s) \in E_n$. If $\s \in E_n$, then $\lambda(\s) \in D_n$. Also, $\l_n(\l_n(\s)) = \s$.
\end{T2}

\begin{proof}[Proof of Theorem \ref{thm:invol}]


   We will use the following alternative interpretation of the map $\l_n$: Let $\s \in D_n \cup E_n$. If $\s$ has any fixed point $m$, we add the 2-cycle $(n+1 \,\, m)$. The $n+1$ is a placeholder and will be discarded after applying the map to $\sigma$. Let $N = n$ if $n+1$ was not added as a placeholder, and otherwise let $N = n+1$.
Define $\pi_{N,j}$ to be a permutation on $[N] \setminus [j-1]$ given by

$$\pi_{N,j} = \begin{cases} (j \,\, j+1) \cdots (N-1 \,\, N) &\text{if }N-j+1\text{ is even} \\
(j \,\, j+1) \cdots (N-2 \,\, N-1)(N) &\text{if } N-j+1\text{ is odd.}\end{cases}$$

Then let $j$ be the smallest integer such that the disjoint cycle decomposition of $\s$ is as follows:
$$
\sigma = \delta \circ \pi_{N,j}
$$
with $\delta \in D_{N-1}$, having no copy of $\pi_{j-1,i}$ at the end. If there is no pattern of simple transpositions at the end, we let $j= N+1$. Then we send $\sigma$ through the following map.

Case 1. If $\delta$ has $j-1$ in a 2-cycle, we have 
$$
\sigma = (\cdots) (\cdots j-2 \cdots ) (j-1 \,\, a) \circ \pi_{N,j}
$$
which is sent to
$$
\tau = (\cdots) (\cdots j -2 \,\, a \cdots ) (j-1 \,\, j) \circ \pi_{N,j+1}.
$$

where any values above $n$ are excluded from the cycles.

Case 2. If $\delta$ does not have $j-1$ in a 2-cycle, we have
$$
\sigma = (\cdots)(\cdots j-1 \,\, a \cdots ) \circ \pi_{N,j}
$$
which is sent to
$$
\tau = (\cdots)(\cdots j-1 \cdots ) (j \,\, a) \circ \pi_{N,j+1}.
$$

Again, any values above $n$ are excluded from the cycles. We show that this equivalent map has the desired property. Let $\s \in D_n \cup E_n$, $\s \neq \pi_n$.


Case 1. Suppose $\s \in D_n$. Since $\s$ has no fixed point, we do not add $n+1$ to the disjoint cycle decomposition, and $N = n$.

Case 1a. If $\sigma$ is of the form $(\cdots)(\cdots j-2 \cdots ) (j-1 \,\, a) \circ \pi_{n,j}$ for some minimal $j$, then
$$
\l_n(\s) = (\cdots)(\cdots j -2 \,\, a \cdots ) (j-1 \,\, j) \circ \pi_{n,j+1}
$$
where any values above $n$ are excluded from the cycles. 

Note that if $j$ was equal to $n+1$ when we first applied $\l_n$, then $j-1 = n$ is now fixed by $\l_n(\s)$ since this is in the first case of the definition of $\l_n$. Otherwise if $j$ was less than $n+1$, then the disjoint cycle decomposition of $\s$ ends with a nonempty permutation 
$\pi_{n,j}$, and since $\s \in D_n$, it follows that $n-j+1$ is even. Then $n- (j+1) + 1$ is odd, so $\pi_{n,j+1}$ contains the 1-cycle $(n)$.
Also since $j$ was less than $n+1$, every element up to $j$ appears in a cycle of length 2 or more in $\l_n(\s)$, so the only fixed point of $\l_n(\s)$ occurs in $\pi_{n,j+1}$. Either way, in the first case of the definition of $\l_n$, $n$ becomes fixed by $\l_n(\s)$, so $\l_n(\s) \in E_n$. 

Then, upon applying $\l_n$ again, we add $n+1$ to the cycle $(n)$, so we apply $\l_n$ to the permutation $(\cdots)(\cdots j -2 \,\, a \cdots ) (j-1 \,\, j) \circ \pi_{n+1,j+1}$. Then we
have
\al{
\l_n(\l_n(\s)) &= \l_n((\cdots)(\cdots j -2 \,\, a \cdots ) (j-1 \,\, j) \circ \pi_{n+1,j+1}) \\
&= (\cdots)(\cdots j -2 \cdots ) (j-1 \,\, a) \circ \pi_{n+1,j},
}
which is equal to $\s$ after removing any elements greater than $n$.

Case 1b. Otherwise, $\s$ is of the form $(\cdots)(\cdots j-1 \,\, a \cdots ) \circ \pi_{n,j}$ for some minimal $j$. Then
$$
\l_n(\s) = (\cdots)(\cdots j-1 \cdots ) (j \,\, a) \circ \pi_{n,j+1}.
$$
where any values above $n$ are excluded from the cycles. 

If $j$ was equal to $n+1$ when $\l_n$ was first applied, then the disjoint cycle decomposition of $\s$ had no pattern of simple transpositions 
at the end, 
so $\s = (\cdots)(\cdots j-1 \,\, a \cdots)$. In this case, $a = \s(j-1)$ is now fixed by $\l_n(\s)$, so $\l_n(\s) \in E_n$. Then when applying $\l_n$ again, we are applying $\l_n$ to the permutation 
$$(\cdots)(\cdots j-1 \cdots ) (n+1 \,\, a),$$ which is mapped under the first case of $\l_n$ to $(\cdots)(\cdots j-1 \,\, a \cdots) = \s$.

If $j$ was less than $n+1$, then, as argued previously, $\pi_{n,j+1}$ contains $n$ as a fixed point. Also, every element up to $j$ appears in a cycle of length 2 or more in $\l_n(\s)$, so the only fixed point of $\l_n(\s)$ occurs in $\pi_{n,j+1}$; thus $\l_n(\s) \in E_n$.

Also in this case, when applying $\l_n$ again, we add $n+1$ to the cycle $(n)$, so we apply $\l_n$ to the permutation $(\cdots)(\cdots j -1 \cdots ) (j \,\, a) \circ \pi_{n+1,j+1}$. Then we have
\al{
\l_n(\l_n(\s)) &= \l_n((\cdots)(\cdots j-1 \cdots ) (j \,\, a) \circ \pi_{n+1,j+1}) \\
&= (\cdots)(\cdots j-1 \,\, a \cdots ) (j \,\, j+1) \circ \pi_{n,j+2}\\
&= (\cdots)(\cdots j-1 \,\, a \cdots ) \circ \pi_{n+1,j},
}
which is equal to $\s$ after removing any elements greater than $n$.

Case 2. Suppose $\s \in E_n$. In this case $\s$ has some fixed point $m$, so we add $n+1$ to the cycle containing $m$. After making this modification, we have two cases depending on the disjoint cycle decomposition of $\s$.

Case 2a. If $\sigma$ is of the form $(\cdots)(\cdots j-2 \cdots ) (j-1 \,\, a) \circ \pi_{n+1,j}$ for some minimal $j$, then $\l_n(\s) = (\cdots)(\cdots j -2 \,\, a \cdots ) (j-1 \,\, j) \circ \pi_{n+1,j+1},$
where any values above $n$ are excluded from the cycles, so this is equivalent to
$$\l_n(\s) = (\cdots)(\cdots j -2 \,\, a \cdots ) (j-1 \,\, j) \circ \pi_{n,j+1},$$
where any values above $n$ are excluded from the cycles. Note that if $j$ was equal to $n+2$ when $\l_n$ was first applied, then $j-1 = n+1$ was in a 2-cycle with $a$, which was originally the fixed point of $\s$. Then in $\l_n(\s)$, $a$ is put into a cycle after $j-2 = n$, so it is no longer fixed. Also if $j$ was equal to $n+2$, then $\l_n(\s)$ has no 
pattern of simple transpositions
at the end after removing the elements larger than $n$, so $\l_n(\s) = (\cdots)(\cdots j-2 \,\, a \cdots) = (\cdots)(\cdots n \,\, a \cdots) \in D_n$.

Also, applying $\l_n$ again, we have that 
\al{
\l_n(\l_n(\s)) &= \l_n((\cdots)(\cdots n \,\, a \cdots)) \\
&= (\cdots)(\cdots n \cdots) (n+1 \, a).
}
which is equal to the original $\s$ after removing $n+1$. 

If $j$ was less than $n+2$, then the disjoint cycle decomposition of $\s$ ended with a nonempty permutation $\pi_{n+1,j}$, and since $\s$ was originally in $E_n$ and we added $n+1$ so that $\s$ would have no fixed point, it follows that $n+ 1-j+1$ is even. Then $n+1 - (j+1) + 1$ is odd, so $\pi_{n+1,j+1}$ contains 1 fixed point, which can only be $n+1$.
Also since $j$ was less than $n+2$, every element up to $j$ appears in a cycle of length 2 or more in $\l_n(\s)$, so the only fixed point in $\l_n(\s)$ can be $n+1$, but this will be removed in the end since $n+1 > n$, leaving $\l_n(\s)$ with no remaining fixed points. Thus $\l_n(\s) \in D_n$. 
Applying $\l_n$ again, we have
\al{
\l_n(\l_n(\s)) &= \l_n((\cdots)(\cdots j -2 \,\, a \cdots ) (j-1 \,\, j) \circ \pi_{n,j+1}) \\
&= \l_n((\cdots)(\cdots j -2 \,\, a \cdots ) \circ \pi_{n,j-1}) \\
&= (\cdots)(\cdots j - 2 \cdots) (j-1 \,\, a) \circ \pi_{n, j},
}
which is equal to the original $\s$ after removing elements larger than $n$.

Case 2b. Otherwise, $\s$ is of the form $(\cdots)(\cdots j-1 \,\, a \cdots ) \circ \pi_{n+1,j}$ for some minimal $j$. Then $
\l_n(\s) = (\cdots)(\cdots j-1 \cdots ) (j \,\, a) \circ \pi_{n+1,j+1}$,
where any values above $n$ are excluded from the cycles, so this is equivalent to
$$
\l_n(\s) = (\cdots)(\cdots j-1 \cdots ) (j \,\, a) \circ \pi_{n,j+1},$$
where any values above $n$ are excluded from the cycles.
In this case, $j$ cannot have been equal to $n+2$, because if it were, then the disjoint cycle decomposition of $\s$ had no nonempty permutation $\pi_{n+1,j}$ at the end, so $\s = (\cdots)(\cdots j-1 \,\, a \cdots) = (\cdots)(\cdots n+1 \,\, a \cdots)$. However, we could only add $n+1$ in a transposition with the original fixed point of $\s$, so $n+1$ would not appear in a cycle of length greater than 2. Also, $j$ cannot equal $n+1$ since if it did, then $\s$ would have ended with the 1-cycle $\pi_{n+1, n+1} = (n+1)$, and this cannot happen because again, $n+1$ can only appear in a transposition.

Thus $j$ must have been less than $n+1$, so a nontrivial copy of $\pi_{n+1,j}$ appeared in the disjoint cycle decomposition of $\s$. As argued previously, it follows that $\pi_{n+1, j+1}$ contains 1 fixed point, which can only be $n+1$. Again, since $j$ was less than $n+1$, every element up to $j$  appears in a cycle of length at least 2, so the only fixed point in $\l_n(\s)$ can be $n+1$, which will be removed at the end since $n+1 > n$. Thus $\l_n(\s)$ has no fixed point, so $\l_n(\s) \in D_n$.

Also, when applying $\l_n$ again, we have
\al{
\l_n(\l_n(\s)) &= \l_n((\cdots)(\cdots j-1 \cdots ) (j \,\, a) \circ \pi_{n,j+1}) \\
&= (\cdots)(\cdots j-1 \,\, a \cdots ) (j \,\, j+1) \circ \pi_{n,j+2}\\
&= (\cdots)(\cdots j-1 \,\, a \cdots ) \circ \pi_{n,j},
}
which is equal to the original $\s$ after removing any elements greater than $n$.

So for $\s \neq \pi_n$, if $\s \in D_n$, then $\l_n(\s) \in E_n$, and if $\s \in E_n$, then $\l_n(\s) \in D_n$. Also, in all cases, $\l_n(\l_n(\s)) = \s$; thus $\l_n$ is an involution on $D_n \cup E_n$ which swaps the elements of $D_n$ and $E_n$ excluding $\pi_n$.
\end{proof}

\remove{
\begin{lem}\label{shortlemma}
If $n$ appears in a transposition in $\s \in D_n$, then $\a_n(\s)$ fixes $n$.
\end{lem}

\begin{proof}
    Let $\s \in D_n$ with $n$ appearing in a transposition in $\s$. If $\s(n) = n-1$, then in the combinatorial description of $\a_n$, we have $j<n$. Then in $\a_n(\s)$, $n$ is fixed. Otherwise if $\s(n) = a \neq n-1$, then we have $j = n+1$ and so $n = j-1$ is in a transposition, so this is in Case 1 of the combinatorial description of $\a_n$, which results in $n$ being fixed in $\a_n(\s)$.
\end{proof}
}

\begin{T3}
    The map $Y_n$ obtained by subtracting $A_n$ from $H_n$ is equivalent to $Z_n$. That is,
$
Y_n = H_n \setminus A_n = H_n \setminus (f_n\inv \circ \a_n) = (H_n \setminus f_n\inv) \circ \overline{\a}_n = (H_n \setminus f_n\inv) \circ (\id \setminus \a_n) = \zeta_n \circ \overline{\a}_n = Z_n.
$
\end{T3}
\begin{proof}[Proof of Theorem \ref{thm:subtr}]
The map $Y_n: \overline{D}_n \to [n] \times \overline{D}_{n-1} \mp \Pi_n$ obtained by subtracting $A_n$ from $H_n$ can be found as follows: Let $\s \in \overline{D}_n$. Then apply $H_n$ to obtain $(\s(n), \s \setminus n)$. If $\s \setminus n \in \overline{D}_{n-1}$, then we are done. This occurs when $\s$ had a fixed point other than $n$. Otherwise, if $n$ was the unique fixed point of $\s$, then $\s \setminus n \in D_{n-1}$ so we apply $A_n\inv = \a_n\inv \circ f_n$ to $(\s(n), \s \setminus n)$. 
Since $n$ was the unique fixed point, $(\s(n), \s \setminus n) = (n, \s \setminus n)$, and $f_n$ sends this to $\s \setminus n \circ (n) = \s$. Then we apply $\a_n\inv$ and reapply $H_n$. 
Thus if $n$ was the unique fixed point, $Y_n$ sends $\s$ to $(\a_n\inv(\s)(n), \a_n\inv(\s) \setminus n)$.

This is our final description of $Y_n$:
\al{
Y_n: \overline{D}_n &\ra [n] \times \overline{D}_{n-1} \mp \Pi_n \\
\s &\mapsto \begin{cases}
(\s(n), \s \setminus n) &\text{ if $\s$ has a fixed point $\neq n$} \\
(\a_n\inv(\s)(n), \a_n\inv(\s) \setminus n) &\text{ if $n$ is the unique fixed point of $\s$.}
\end{cases}
}
Now, $Z_n = \zeta_n \circ \overline{\a}_n$ maps as follows: Let $\s \in \nD_n$. Then apply $\overline{\a}_n$. If $\s \in \nE_n$, this does nothing. Then we apply $\zeta_n$ to $\s$. Since $\s \in \nD_n$, it gets sent to $(\s(n), \s \setminus n)$.

Suppose otherwise that $\s \in E_n$. 
In this case, $\overline{\a}_n (\s) = \a_n\inv(\s)$. Let $\t = \a_n\inv(\s) \in D_n$. Then this gets sent through $\zeta_n$. If $\t$ has $n$ in a 2-cycle, then $\zeta_n(\t) = (\t(n), \t \setminus n)$. Otherwise if $n$ appears in $\t$ in a cycle of length 3 or more, $\zeta_n(\tau) = (\t(\t(n)), \t \setminus \{n, \t(n)\} \circ (\t(n))$.

We claim that $n$ appears in a transposition in $\t = \a_n\inv(\s)$ if and only if $n$ was a fixed point of $\s$. Suppose that $n$ appears in a transposition in $\t$.  To recover $\s$, we apply $\a_n$ to $\t$. If $\t(n) = n-1$, then in the combinatorial description of $\a_n$, we have $j<n$. Then in $\s = \a_n(\t)$, $n$ is fixed. Otherwise if $\t(n) = a \neq n-1$, then we have $j = n+1$ and so $n = j-1$ is in a transposition, so this is in Case 1 of the combinatorial description of $\a_n$, which results in $n$ being fixed in $\a_n(\t)$.

On the other hand, suppose $n$ was fixed in $\s$. We can check where $n$ appears in $\a_n\inv(\s) = \l_n(\s)$. Since $n$ was fixed, we have $n+1$ in a transposition in $\gamma_n(\s)$. Then applying $\a_{n+1}$ to this, we have $j<n+1$, so $n+1$ becomes fixed and $n$ appears in a transposition in $\a_n\inv(\s)$.


So we have that $\t$ has $n$ in a 2-cycle if and only if $n$ is fixed in $\s$, and $\s \in E_n$, so $n$ is the unique fixed point of $\s$. Thus $\zeta_n(\a_n\inv(\s)) = (\t(n), \t \setminus n)$ when $\s$ had $n$ fixed.

It follows from the claim that if $n$ appears in a cycle of length 3 or more in $\t$, then $n$ was not fixed in $\s$. In this case, $\s$ has one of the following cycle decompositions:

Case 1. $\s = (\cdots) (m)(n \, a)$, for some $m, a \neq n$. In this case, $\t = \a_n\inv(\s) = (\cdots)(n\, m\, a)$.

Case 2. $\s = (\cdots) (m)(n \, a\, b \, \cdots)$, for some $m, a,b \neq n$. In this case, $\t = \a_n\inv(\s) = (\cdots)(n\, m\, a\, b \, \cdots)$.

In both cases, the original fixed point $m$ is put into the cycle containing $n$ just after $n$ and just before $\s(n)$. So $m = \t(n)$, and $\t(\t(n)) = \s(n)$. 

Also, since $\t = \a\inv(\s)$ is obtained by adding $m$ to the cycle containing $n$ after $n$, it follows that $\t \setminus \{ n, \t(n)\} \circ (\t(n)) = \s \setminus n$, since removing $n$ and $m$ and then fixing $m$ in $\t$ is the same as simply removing $n$ from $\s$.

So if $n$ was not the unique fixed point of $\s \in E_n$, we have that $\zeta_n(\a_n\inv(\s)) = (\s(n), \s \setminus n)$.


Collecting these results, we have the following description of $Z_n$:
\al{
Z_n: \overline{D}_n &\ra [n] \times \overline{D}_{n-1} \mp \Pi_n \\
\s &\mapsto \begin{cases}
(\s(n), \s \setminus n) &\text{ if $\s \in \nE_n$ or if $\s$ has a unique fixed point $m \neq n$} \\
(\t(n), \t \setminus n) &\text{ if $n$ is the unique fixed point of $\s$.}
\end{cases}
}

This is the same as 
\al{
Z_n: \overline{D}_n &\ra [n] \times \overline{D}_{n-1} \mp \Pi_n \\
\s &\mapsto \begin{cases}
(\s(n), \s \setminus n) &\text{ if $\s$ has a fixed point $\neq n$} \\
(\a_n\inv(\s)(n), \a_n\inv(\s) \setminus n) &\text{ if $n$ is the unique fixed point of $\s$.}
\end{cases}
}
which is exactly the description of $Y_n$.
\end{proof}

\end{appendices}
\section*{Acknowledgements}
{I would like to thank my advisor Peter Doyle for many valuable insights on this work.} I would also like to thank Yan Zhuang for helpful 
correspondence.

\printbibliography

\end{document}